\documentclass[reqno,A4paper,12pt]{amsart}

\usepackage{amsmath}
\usepackage{amssymb}
\usepackage{amsthm}
\usepackage{enumerate}
\usepackage{mathrsfs} 

\theoremstyle{plain}
\newtheorem{theorem}{Theorem}[section]

\theoremstyle{definition}
\newtheorem{definition}{Definition}[section]
\newtheorem{remark}{\textit{Remark}}[section]
\newtheorem{example}{\textit{Example}}[section]

\numberwithin{equation}{section}

\makeatletter
\@namedef{subjclassname@2020}{\textup{2020} Mathematics Subject Classification}
\makeatother

\begin{document}
	
\title[Statistical convergence of sequence of partial metric valued functions]%
{Statistical convergence of sequences of partial metric valued functions}
\author[P. Malik \and S. Das]%
{\textbf{Prasanta Malik$^{*}$ \and Saikat Das}}
	
\newcommand{\acr}{\newline\indent}
	
\address{Department of Mathematics, The University of Burdwan, Golapbag, Burdwan-713104, West Bengal, India.} 
\email{pmjupm@yahoo.co.in\acr 
ORCID iD: https://orcid.org/0000-0003-4987-3411}
	
\address{Department of Mathematics, The University of Burdwan, Golapbag, Burdwan-713104,
West Bengal, India.} 
\email{dassaikatsayhi@gmail.com\acr 
ORCID iD: https://orcid.org/0009-0007-7412-9244}
	
\subjclass[2020]{Primary: 40A30, 40A35; Secondary: 54E35}
\keywords{partial metric space, convergence of sequences of functions, Cauchyness of sequences of functions, statistical pointwise convergence, statistical uniform convergence, equi-statistical convergence.\\
$\textit{*Corresponding author}$}
	
\begin{abstract}
In this paper we first introduce and study the notions of pointwise convergence and uniform convergence of sequences of partial metric valued functions. Also introducing the notions of pointwise Cauchyness and uniform Cauchyness of sequences of partial metric valued functions, we study their relationship with pointwise convergence and uniform convergence. Further we introduce and study the notions of statistical pointwise convergence and statistical uniform convergence of sequences of partial metric valued functions including their interrelationship. Also, introducing the notion of equi-statistical convergence of sequences of partial metric valued functions we study its relationship with statistical pointwise convergence and statistical uniform convergence. 
\end{abstract}
	
\maketitle

\section{\textbf{Introduction and background}}

The notion of convergence of sequences of real numbers was extended to the notion of statistical convergence by Fast \cite{Fa} and Steinhaus \cite{Sten} independently with the concept of natural density of subsets of natural numbers. A lot of works have been done in this direction, after the works of Schoenberg \cite{Sc}, Salat \cite{Sl} and Fridy \cite{Fr1}. For more primary works in this line one can see  \cite{Fr2, Ko1}. 

Using the concept of natural density, in \cite{Gokhan} G$\ddot{o}$khan et al. introduced and studied the concept of pointwise statistical convergence for sequences of real valued functions and in \cite{Gungor} G$\ddot{u}$ng$\ddot{o}$r et al. introduced and studied the notion of uniform statistical convergence for sequences of real valued functions. 

On the other hand, the concept of partial metric space was first introduced by Matthews \cite{Matt} in the year 1994, as an extension of usual metric space to address some practical problems, specially in computer programming, where ``equality $\Rightarrow$ indistancy" does not hold (see \cite{Buk}). More primary works about the notion of partial metric space, one can see \cite{Al, Buk, Ol} and many others. In \cite{Buk}, Bukatin et al. introduced and studied the notions of convergence and Cauchyness of sequences in partial metric spaces and in \cite{Nu}, Nuray extended these notions to the notions of statistical convergence and statistical Cauchyness of sequences using the concept of natural density of subsets of natural numbers.

Because of immense importance of partial metric, we first introduce the notions of pointwise convergence and uniform convergence of sequences of partial metric valued functions and study their basic properties including their interrelationship. Then we introduce the notions of pointwise and uniform Cauchyness of sequences of partial metric valued functions and study their relationship with pointwise and uniform convergence. In section 4 of this paper, we introduce the notions of statistical pointwise convergence and statistical uniform convergence of sequences of partial metric valued fucntions which naturally extend the works of \cite{Gokhan, Gungor}. We also study their basic properties including their interrelationship.
In the last section of this paper, we introduce and study the notion of equi-statistical convergence of sequences of partial metric valued functions and observe that, this notion is somewhere lying between the concepts of statistical pointwise convergence and statistical uniform convergence.


\section{\textbf{Basic Definitions and Notation}}

In this section, we recall some basic definitions and notations which are required in our study. Throughout this paper, $\mathbb{R}$ denotes the set of all real numbers, $\mathbb{R}_{\geq 0}$ denotes the set of all non-negative real numbers and $\mathcal{P}$ denotes the set of all perfect square natural numbers.

We first recall the concept of statistical convergence of sequences of real numbers and real valued functions.

\begin{definition} \cite{Fr1, Sl}
Let $\mathcal{A}$ be a subset of $\mathbb N$. For each $n \in \mathbb{N}$, let $d_{n}(\mathcal{A}) = |\mathcal{A} \cap \{1, 2, \dots, n\}|/n$, called the $n^{th}$ partial density of $\mathcal{A}$, where $|\mathcal{A}|$ denote the cardinality of the set $\mathcal{A}$. We say that, the set $\mathcal{A}$ has natural density $d(\mathcal{A})$ if the limit $\displaystyle{\lim_{n\rightarrow \infty}}d_{n}(\mathcal{A})$ exists finitely and
\begin{align*}
d(\mathcal{A}) = \displaystyle{\lim_{n\rightarrow \infty}} d_{n}(\mathcal{A}).
\end{align*}
\end{definition}

\begin{definition} \cite{Fr1, Sl}
Let $\{x_{n}\}_{n\in \mathbb N}$ be a sequence of real numbers. Then $\{x_{n}\}_{n\in \mathbb N}$ is said to be statistically convergent to $x_{o} (\in \mathbb R)$, if for any $\epsilon > 0, ~d(\mathcal{A}(\epsilon))= 0$, where
\begin{align*}
\mathcal A(\epsilon)= \left\{n\in \mathbb N: \left|x_{n} - x_{o}\right|\geq \epsilon\right\}.
\end{align*}
In this case, we write $st-\displaystyle{\lim_{n\rightarrow \infty}}x_{n}= x_{o}.$
\end{definition}

\begin{definition} \cite{Gokhan}
A sequence of real valued functions $\{f_{k}\}_{k\in\mathbb{N}}$ defined on $\mathcal{S} ~(\subset \mathbb{R})$ is said to pointwise statistically convergent to a real valued function $f$ on  $\mathcal{S}$ if, for each $x \in \mathcal{S}$ and for every $\epsilon > 0$,
\begin{align*}
\displaystyle{\lim_{n\rightarrow \infty}} \frac{1}{n}|\{k \leq n: |f_{k}(x) - f(x)| \geq \epsilon\}| = 0.
\end{align*}
In this case, we write $f_{k} \stackrel{st}{\rightarrow} f$ on $\mathcal{S}$.
\end{definition} 

\begin{definition} \cite{Gungor}
A sequence of real valued functions $\{f_{k}\}_{k\in\mathbb{N}}$ defined on $\mathcal{S} ~(\subset \mathbb{R})$ is said to be uniformly statistically convergent to a real valued function $f$ on  $\mathcal{S}$ if, for every $\epsilon > 0$,
\begin{align*}
\displaystyle{\lim_{n\rightarrow \infty}} \frac{1}{n}|\{k \leq n: |f_{k}(x) - f(x)| \geq \epsilon ~\mbox{for every}~ x \in \mathcal{S}\}| = 0.
\end{align*}
In this case, we write $f_{k} \stackrel{st}{\rightrightarrows} f$ on $\mathcal{S}$.
\end{definition}

Following the line of  S. G. Matthews \cite{Matt}, we now recall the concept of partial metric space. 

\begin{definition} \cite{Matt} \label{definition1}
Let $\mathcal{X}$ be a non-empty set and a mapping $\wp: \mathcal{X \times X} \rightarrow \mathbb{R}$ is said to be a partial metric on $\mathcal{X}$ if for any $x, y, z \in \mathcal{X}$, the following conditions are satisfied:
\begin{itemize}
\item[$(\wp1)$] $x = y$ if and only if $\wp(x,x) = \wp(x,y) = \wp(y,y)$;
\item[$(\wp2)$] $0 \leq \wp(x,x) \leq \wp(x,y)$;
\item[$(\wp3)$] $\wp(x,y) = \wp(y,x)$;
\item[$(\wp4)$] $\wp(x,y) \leq \wp(x,z) + \wp(z,y) - \wp(z,z)$. 
\end{itemize}
	
The pair $(\mathcal{X},\wp)$ is called a partial metric space.
\end{definition}

\begin{remark}
Every metric space is a partial metric space but not conversely. To show this we consider the following example.
\end{remark}

\begin{example}
Let $\wp: \mathbb{R \times R}\rightarrow \mathbb{R}$ be a mapping defined as follows:
\begin{align*}
\wp(x,y) = \left|x - y\right| + 5, ~ x, y \in \mathbb{R}.
\end{align*}
Then $\wp$ satisfies all the conditions $(\wp1)$ to $(\wp4)$ of the Definition \ref{definition1}. Hence $(\mathbb{R},\wp)$ is a partial metric space. But $\wp(1,1) = 5 \neq 0$. Therefore $\wp$ is not a metric on $\mathbb{R}$.
\end{example}

\begin{definition} \cite{Buk}
A sequence $\{x_{n}\}_{n\in \mathbb N}$ in a partial metric space $(\mathcal{X},\wp)$ is said to be convergent to a point $x_{o} (\in \mathcal X)$, if 	$\displaystyle{\lim_{n\rightarrow \infty}} \wp(x_{n},x_{o}) = \displaystyle{\lim_{n\rightarrow \infty}} \wp(x_{n},x_{n}) = \wp(x_{o},x_{o})$, i.e. if for every $\epsilon > 0$, there exists $k_{o} \in \mathbb N$, such that
\begin{align*}
&\left|\wp(x_{n}, x_{o})- \wp(x_{o}, x_{o})\right|< \epsilon, \\
\mbox{and}~ & \left|\wp(x_{n}, x_{n})- \wp(x_{o}, x_{o})\right|< \epsilon,  ~\forall~ n\geq k_{o}. 
\end{align*}
In this case, we write $\displaystyle{\lim_{n\rightarrow \infty}}x_{n} = x_{o}$.
\end{definition}

\begin{definition} \cite{Buk}
A sequences $\{x_{n}\}_{n\in \mathbb{N}}$ in a partial metric space $(\mathcal{X},\wp)$ is said to be Cauchy, if there exists a non-negative real number $L$ such that $\displaystyle{\lim_{m,n\rightarrow \infty}}\wp(x_{m},x_{n}) = L$, i.e. if for every $\epsilon > 0$, there exists $k = k(\epsilon) \in \mathbb{N}$, such that
\begin{align*}
\left|\wp(x_{m},x_{n}) - L\right| < \epsilon, ~\forall~ m, n \geq k.
\end{align*}
\end{definition}

\begin{definition} \cite{Nu}
Let $(\mathcal{X},\wp)$ be a partial metric space and  $\{x_{n}\}_{n\in \mathbb{N}}$ be a sequence in $\mathcal{X}$. Then $\{x_{n}\}_{n\in \mathbb{N}}$ is said to be statistically convergent to a point $x_{o} ~(\in \mathcal{X})$, if 
$st-\displaystyle{\lim_{n\rightarrow \infty}}\wp(x_{n},x_{o}) = st-\displaystyle{\lim_{n\rightarrow \infty}}\wp(x_{n},x_{n}) = \wp(x_{o},x_{o})$ i.e. if for any $\epsilon > 0$, there exists $\mathcal{A}(\epsilon) \subset \mathbb{N}$ such that $d(\mathcal{A}) = 0$ and
\begin{align*}
&\left|\wp(x_{n}, x_{o})- \wp(x_{o}, x_{o})\right|< \epsilon, ~\mbox{and}\\
&\left|\wp(x_{n}, x_{n})- \wp(x_{o}, x_{o})\right|< \epsilon,  ~\mbox{for all}~ n \in \mathbb{N} - \mathcal{A}.
\end{align*}
	
In this case, we write $st-\displaystyle{\lim_{n\rightarrow \infty}} x_{n} = x_{o}$. 
\end{definition}


\section{\textbf{Convergence and Cauchyness of sequences of partial metric valued functions}}

In this section we introduce the notions of convergence and Cauchyness of sequences of partial metric valued functions. Throughout the paper, $(\mathcal{X},\wp)$ denotes a partial metric space, $\mathcal{D}$ be a non-empty set and all functions are $(\mathcal{X},\wp)$ valued functions on $\mathcal{D}$, unless otherwise mentioned.


\begin{definition}
	Let $(\mathcal{X},\wp)$ be a partial metric space, $\mathcal{D}$ be a non-empty set and for each $n \in \mathbb{N}, ~f_{n}: \mathcal{D}\rightarrow \mathcal{X}$ be a function. Then $\{f_{n}\}_{n\in \mathbb{N}}$ is said to be pointwise convergent to a function $f: \mathcal{D}\rightarrow \mathcal{X}$, if for each $x \in \mathcal{D}$, the sequence $\{f_{n}(x)\}_{n\in \mathbb{N}}$ converges to $f(x)$, i.e. $\displaystyle{\lim_{n\rightarrow \infty}}\wp(f_{n}(x),f(x)) = \displaystyle{\lim_{n\rightarrow \infty}}\wp(f_{n}(x),f_{n}(x)) = \wp(f(x),f(x))$ i.e. for each $\epsilon > 0$, there exists $k = k(\epsilon,x) \in \mathbb{N}$, such that
	\begin{align*}
		& |\wp(f_{n}(x),f(x)) - \wp(f(x),f(x))| < \epsilon, \\
		\mbox{and}~ & |\wp(f_{n}(x),f_{n}(x)) - \wp(f(x),f(x))| < \epsilon, ~\mbox{for all}~ n \geq k.
	\end{align*}
	In this case we say that, $\{f_{n}\}_{n\in\mathbb{N}}$ converges to $f$ on $\mathcal{D}$ and we write $f_{n} \rightarrow f$ on $\mathcal{D}$, where $f$ is called a pointwise limit of $\{f_{n}\}_{n\in \mathbb{N}}$. 
\end{definition}

\begin{remark}
	In the next Theorem we show that, pointwise limit function of a sequence $\{f_{n}\}_{n\in\mathbb{N}}$ of partial metric valued functions is uniquely determined.
\end{remark}

\begin{theorem}
	Let $(\mathcal{X},\wp)$ be a partial metric space and $\{f_{n}\}_{n\in\mathbb{N}}$ be a sequence of $(\mathcal{X},\wp)$ valued functions on a non-empty set
	$\mathcal{D}$. If the sequence $\{f_{n}\}_{n\in\mathbb{N}}$ converges on $\mathcal{D}$, then the limit function is unique.
\end{theorem}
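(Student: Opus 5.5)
Suppose $\{f_{n}\}_{n\in\mathbb{N}}$ converges pointwise on $\mathcal{D}$ to both $f$ and $g$. The plan is to fix an arbitrary $x \in \mathcal{D}$ and show $f(x) = g(x)$ using axiom $(\wp1)$. That is, we show
\begin{align*}
\wp(f(x),f(x)) = \wp(f(x),g(x)) = \wp(g(x),g(x)).
\end{align*}
Write $a = f(x)$, $b = g(x)$ and $x_{n} = f_{n}(x)$. By hypothesis,
\begin{align*}
\wp(x_{n},a)\to \wp(a,a), \quad \wp(x_{n},x_{n}) \to \wp(a,a), \quad \wp(x_{n},b)\to \wp(b,b), \quad \wp(x_{n},x_{n})\to \wp(b,b).
\end{align*}

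\textbf{Step 1: equal self-distances.} The sequence $\wp(x_{n},x_{n})$ of real numbers has both $\wp(a,a)$ and $\wp(b,b)$ as limits. Uniqueness of limits in $\mathbb{R}$ then gives $\wp(a,a)=\wp(b,b)$.

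\textbf{Step 2: the cross term.} Apply $(\wp4)$ with $x_{n}$ as the intermediate point:
\begin{align*}
\wp(a,b) \leq \wp(a,x_{n}) + \wp(x_{n},b) - \wp(x_{n},x_{n}).
\end{align*}
The right-hand side tends to $\wp(a,a) + \wp(b,b) - \wp(b,b) = \wp(a,a)$, so $\wp(a,b) \leq \wp(a,a)$. Combined with $(\wp2)$, which gives $\wp(a,a)\leq \wp(a,b)$, this yields $\wp(a,b)=\wp(a,a)=\wp(b,b)$. Hence $a=b$ by $(\wp1)$. Since $x$ was arbitrary, $f=g$ on $\mathcal{D}$.

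The only point needing care is Step 2. Equal self-distances alone do not force $a=b$ in a partial metric space, so the cross term must be controlled. The triangle inequality $(\wp4)$ through $x_{n}$, with the subtracted term $\wp(x_{n},x_{n})$, does exactly this. Everything reduces to elementary limit arithmetic in $\mathbb{R}$, which could also be written with explicit $\epsilon$ and $k=\max\{k_{1},k_{2}\}$ following the paper's $\epsilon$-style definition.
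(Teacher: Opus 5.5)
Your proof is correct and matches the paper's argument step for step. Both use uniqueness of real limits of $\wp(f_n(x),f_n(x))$ to get equal self-distances, then $(\wp4)$ through $f_n(x)$ plus a passage to the limit to get $\wp(f(x),g(x))\le\wp(f(x),f(x))$, and finally $(\wp2)$ and $(\wp1)$ to conclude $f(x)=g(x)$ (with $(\wp3)$ used implicitly to identify $\wp(a,x_n)$ with $\wp(x_n,a)$).
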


\begin{proof}
Let $f_{n}\rightarrow f$ and $f_{n}\rightarrow g$ on $\mathcal{D}$. Fix $x_{o} \in \mathcal{D}$. Then, 
\begin{align*}
& \displaystyle{\lim_{n\rightarrow \infty}} \wp(f_{n}(x_{o}),f(x_{o})) = \displaystyle{\lim_{n\rightarrow \infty}} \wp(f_{n}(x_{o}),f_{n}(x_{o})) = \wp(f(x_{o}),f(x_{o})) \\
\mbox{and}~ & \displaystyle{\lim_{n\rightarrow \infty}} \wp(f_{n}(x_{o}),g(x_{o})) = \displaystyle{\lim_{n\rightarrow \infty}} \wp(f_{n}(x_{o}),f_{n}(x_{o})) = \wp(g(x_{o}),g(x_{o})).
\end{align*}
Since the limit of a convergent sequence of real numbers is uniquely determined, so we must have $\wp(f(x_{o}),f(x_{o})) = \wp(g(x_{o}),g(x_{o}))$. Now,
\begin{align*}
&\wp(f(x_{o}),g(x_{o})) \leq \wp(f(x_{o}),f_{n}(x_{o})) + \wp(f_{n}(x_{o}),g(x_{o})) - \wp(f_{n}(x_{o}),f_{n}(x_{o})), ~\mbox{for all}~ n \in \mathbb{N}.
\end{align*}
Taking limit on both sides as $n \rightarrow \infty$, we get
\begin{align*}
& \wp(f(x_{o}),g(x_{o})) \leq \wp(f(x_{o}),f(x_{o})) + \wp(g(x_{o}),g(x_{o})) - \wp(g(x_{o}),g(x_{o})) \\
\Rightarrow~ & \wp(f(x_{o}),g(x_{o})) \leq \wp(f(x_{o}),f(x_{o})) \leq \wp(f(x_{o}),g(x_{o})) \\
\Rightarrow~ & \wp(f(x_{o}),g(x_{o})) = \wp(f(x_{o}),f(x_{o})).
\end{align*}
Thus, 
\begin{align*}
& \wp(f(x_{o}),f(x_{o})) =  \wp(f(x_{o}),g(x_{o})) = \wp(g(x_{o}),g(x_{o})) \\
\Rightarrow~ & f(x_{o}) = g(x_{o}). 
\end{align*}
Since, $x_{o} \in \mathcal{D}$ is arbitrary, so $f = g$ on $\mathcal{D}$.
\end{proof}

\begin{definition}
Let $(\mathcal{X},\wp)$ be a partial metric space, $\mathcal{D}$ be a non-empty set and for each $n \in \mathbb{N}, ~f_{n}: \mathcal{D}\rightarrow \mathcal{X}$ be a function. Then $\{f_{n}\}_{n\in \mathbb{N}}$ is said to be uniformly convergent to the function $f: \mathcal{D}\rightarrow \mathcal{X}$ on $\mathcal{D}$, if for each $\epsilon > 0$, there exists $k = k(\epsilon) \in \mathbb{N}$, such that
\begin{align*}
&\forall~ x \in \mathcal{D}, |\wp(f_{n}(x),f(x)) - \wp(f(x),f(x))| < \epsilon ~\mbox{and}~ |\wp(f_{n}(x),f_{n}(x)) - \wp(f(x),f(x))| < \epsilon, \\
&\mbox{for all}~ n \geq k.
\end{align*}
In this case, we write $f_{n} \rightrightarrows f$ on $\mathcal{D}$ and $f$ is called the uniform limit of $\{f_{n}\}_{n\in\mathbb{N}}$.
\end{definition}

\begin{remark}
It is clear that, uniform convergence implies pointwise convergence but not conversely. 
\end{remark}
Next theorem gives a necessary and sufficient condition for uniform convergence of sequences of partial metric valued functions.

\begin{theorem}
Let $(\mathcal{X},\wp)$ be a partial metric space, $\mathcal{D}$ be a non-empty set and for each $n \in \mathbb{N}, ~f_{n}: \mathcal{D}\rightarrow \mathcal{X}$ be a function. Then $\{f_{n}\}_{n\in \mathbb{N}}$ is uniformly convergent to the function $f: \mathcal{D}\rightarrow \mathcal{X}$ on $\mathcal{D}$ if and only if
$\displaystyle{\lim_{n\rightarrow\infty}\sup_{x\in \mathcal{D}}}|\wp(f_{n}(x),f(x)) - \wp(f(x),f(x))| = \displaystyle{\lim_{n\rightarrow\infty}\sup_{x\in \mathcal{D}}}|\wp(f_{n}(x),f_{n}(x)) - \wp(f(x),f(x))|= 0$.
\end{theorem}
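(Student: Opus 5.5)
The plan is to prove both directions directly from the definition of uniform convergence. The key is to package the two conditions using the suprema
\begin{align*}
M_{n} = \sup_{x\in \mathcal{D}}|\wp(f_{n}(x),f(x)) - \wp(f(x),f(x))| ~\mbox{and}~ N_{n} = \sup_{x\in \mathcal{D}}|\wp(f_{n}(x),f_{n}(x)) - \wp(f(x),f(x))|,
\end{align*}
which are elements of $\mathbb{R}_{\geq 0}\cup\{+\infty\}$. The statement then says that $f_{n} \rightrightarrows f$ on $\mathcal{D}$ if and only if $M_{n}\to 0$ and $N_{n}\to 0$.

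For necessity, assume $f_{n} \rightrightarrows f$ on $\mathcal{D}$ and fix $\epsilon>0$. Take $k=k(\epsilon)$ from the definition of uniform convergence. For every $n\geq k$, both absolute differences are $<\epsilon$ for every $x\in\mathcal{D}$. Passing to the supremum over $x$ gives $M_{n}\leq \epsilon$ and $N_{n}\leq\epsilon$ for all $n\geq k$; in particular $M_n, N_n$ are finite from $k$ on. Since $\epsilon$ is arbitrary, $\lim M_{n}=\lim N_{n}=0$.

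For sufficiency, assume both limits are $0$ and fix $\epsilon>0$. Choose $k_{1}$ with $M_{n}<\epsilon$ for $n\geq k_{1}$ and $k_{2}$ with $N_{n}<\epsilon$ for $n\geq k_{2}$, and put $k=\max\{k_{1},k_{2}\}$. Then for $n\geq k$ and any $x\in\mathcal{D}$, each absolute difference is bounded by the corresponding supremum, so both are $<\epsilon$. This $k$ depends only on $\epsilon$, which is exactly the definition of $f_{n} \rightrightarrows f$.

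There is no real obstacle here. The only points needing care are two. First, one must use a strict inequality $<\epsilon$ on the supremum in the sufficiency direction, or equivalently work with $\epsilon/2$, to recover the strict inequalities in the definition. Second, the suprema may be infinite for finitely many $n$, which does not affect the limit statements.
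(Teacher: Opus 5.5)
Your proposal is correct. It is essentially the argument the paper has in mind: the paper omits this proof as straightforward, and its proof of the statistical analogue (Theorem~\ref{Theorem4.1}) uses the same two steps. For necessity it passes to the supremum, using $\epsilon/2$ to get a strict inequality where you use $\le \epsilon$, which is also enough. For sufficiency it bounds each difference by its supremum after combining the two thresholds.
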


\begin{proof}
The proof is straightforward and so omitted. 
\end{proof}

Now we introduce the notions of pointwise Cauchyness and uniform Cauchyness of sequences of partial metric valued functions.

\begin{definition}
Let $(\mathcal{X},\wp)$ be a partial metric space and $\{f_{n}\}_{n\in\mathbb{N}}$ be a sequence of $(\mathcal{X},\wp)$ valued functions defined on a non-empty set $\mathcal{D}$.
Then $\{f_{n}\}_{n\in\mathbb{N}}$ is said to be pointwise Cauchy on $\mathcal{D}$, if for each $x \in \mathcal{D}$, the sequence $\{\wp(f_{m}(x),f_{n}(x))\}_{m,n\in \mathbb{N}}$ converges to a real number $l = l(x) \geq 0$, i.e. if for each $\epsilon > 0$, there exists $k = k(\epsilon,x) \in \mathbb{N}$ such that,
\begin{align*}
|\wp(f_{m}(x),f_{n}(x)) - l| < \epsilon, ~\mbox{for all}~ m, n \geq k.
\end{align*} 
\end{definition}

\begin{definition}
Let $(\mathcal{X},\wp)$ be a partial metric space, $\mathcal{D}$ be a non-empty set and for each $n \in \mathbb{N}, ~f_{n}: \mathcal{D}\rightarrow \mathcal{X}$ be a function. Then $\{f_{n}\}_{n\in\mathbb{N}}$ is said to be uniformly Cauchy on $\mathcal{D}$, if the convergence of the sequence $\{\wp(f_{m}(x),f_{n}(x))\}_{m,n\in \mathbb{N}}$ is independent of $x ~(\in \mathcal{D})$, i.e. if for each $\epsilon > 0$, there exists $N = N(\epsilon) \in \mathbb{N}$ such that,
\begin{align*}
\sup_{x \in \mathcal{D}} |\wp(f_{m}(x),f_{n}(x)) - \wp(f_{j}(x),f_{k}(x))| < \epsilon, ~\mbox{for all}~ m \geq j \geq N, ~n \geq k \geq N.
\end{align*}
\end{definition}

\begin{remark}
Clearly every Uniform Cauchy sequence of partial metric valued functions is pointwise Cauchy but not conversely.	
\end{remark}

We now study the relationship between the notions of pointwise convergence and uniform convergence of sequences of partial metric valued functions with the notions of Cauchyness. 

\begin{theorem} \label{Theorem3.4}
Let $(\mathcal{X},\wp)$ be a partial metric space, $\mathcal{D}$ be a non-empty set and for each $n \in \mathbb{N}, ~f_{n}: \mathcal{D}\rightarrow \mathcal{X}$ be a function. If the sequence $\{f_{n}\}_{n\in\mathbb{N}}$ converges pointwise on $\mathcal{D}$, then it is pointwise Cauchy on $\mathcal{D}$.
\end{theorem}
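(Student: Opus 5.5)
Fix $x\in\mathcal{D}$ and write $a_n=f_n(x)$, $a=f(x)$. By hypothesis $\wp(a_n,a)\to\wp(a,a)$ and $\wp(a_n,a_n)\to\wp(a,a)$. The natural candidate for the Cauchy limit is $l(x)=\wp(f(x),f(x))\ge 0$ (nonnegative by $(\wp2)$). So the goal is to show that $\wp(a_m,a_n)\to\wp(a,a)$ as $m,n\to\infty$, by bounding $\wp(a_m,a_n)$ from above and below.

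\emph{Upper bound.} By $(\wp4)$ with $z=a$,
\begin{align*}
\wp(a_m,a_n)\le \wp(a_m,a)+\wp(a,a_n)-\wp(a,a).
\end{align*}
Given $\epsilon>0$, choose $k=k(\epsilon/2,x)$ from the definition of pointwise convergence. Then for $m,n\ge k$ the right side is less than $\wp(a,a)+\epsilon$, i.e. $\wp(a_m,a_n)-\wp(a,a)<\epsilon$.

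\emph{Lower bound.} This is the step I expect to need care, since the partial metric triangle inequality only gives upper bounds directly. I would use $(\wp2)$: $\wp(a_m,a_n)\ge \wp(a_m,a_m)>\wp(a,a)-\epsilon$ for $m\ge k$, using the second condition in the definition of convergence. Hence $|\wp(a_m,a_n)-\wp(a,a)|<\epsilon$ for all $m,n\ge k$.

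Thus $\{\wp(f_m(x),f_n(x))\}$ converges to $l(x)=\wp(f(x),f(x))\ge0$ for each $x$, which is pointwise Cauchyness. The same $k$ depends only on $\epsilon$ and $x$, as the definition requires.
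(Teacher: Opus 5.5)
Your proof is correct and follows the same route as the paper: both fix $x$, take $l(x)=\wp(f(x),f(x))$, and squeeze $\wp(f_m(x),f_n(x))$ around it for $m,n\ge k$. The paper compresses this into a single four-term absolute-value estimate that it does not justify. Your split makes that justification explicit: an upper bound from $(\wp4)$ with $z=f(x)$, and a lower bound from $(\wp2)$, namely $\wp(f_m(x),f_n(x))\ge\wp(f_m(x),f_m(x))$.
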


\begin{proof}
Let the sequence $\{f_{n}\}_{n\in\mathbb{N}}$ be pointwise convergent to the function $f: \mathcal{D}\rightarrow (\mathcal{X},\wp)$ on $\mathcal{D}$. Let $x_{o} \in \mathcal{D}$. Then we have
\begin{align*}
\displaystyle{\lim_{n\rightarrow \infty}}\wp(f_{n}(x_{o}),f(x_{o})) = \displaystyle{\lim_{n\rightarrow \infty}}\wp(f_{n}(x_{o}),f_{n}(x_{o})) = \wp(f(x_{o}),f(x_{o})).
\end{align*}
Let $\epsilon > 0$ be given. Then there exists $k = k(\epsilon,x_{o}) \in \mathbb{N}$, such that
\begin{align*}
& |\wp(f_{n}(x_{o}),f(x_{o})) - \wp(f(x_{o}),f(x_{o}))| < \frac{\epsilon}{4}, \\
\mbox{and}~ & |\wp(f_{n}(x_{o}),f(x_{o})) - \wp(f_{n}(x_{o}),f_{n}(x_{o}))| < \frac{\epsilon}{4}, ~\mbox{for all}~ n \geq k.
\end{align*}
Then for all $m, n \geq k$,
\begin{align*}
&|\wp(f_{m}(x_{o}),f_{n}(x_{o})) - \wp(f(x_{o}),f(x_{o}))| \leq |\wp(f_{m}(x_{o}),f(x_{o})) - \wp(f(x_{o}),f(x_{o}))| + \\ 
& |\wp(f_{n}(x_{o}),f(x_{o})) - \wp(f(x_{o}),f(x_{o}))| + |\wp(f_{m}(x_{o}),f(x_{o})) - \wp(f_{m}(x_{o}),f_{m}(x_{o}))| \\
&+ |\wp(f_{n}(x_{o}),f(x_{o})) - \wp(f_{n}(x_{o}),f_{n}(x_{o}))| < \epsilon.
\end{align*}  
So, the sequence $\{\wp(f_{m}(x_{o}),f_{n}(x_{o}))\}_{m,n\in\mathbb{N}}$ is convergent to $\wp(f(x_{o}),f(x_{o}))$. Hence the sequence $\{f_{n}\}_{n\in\mathbb{N}}$ is pointwise Cauchy on $\mathcal{D}$.
\end{proof}

\begin{theorem} \label{Theorem3.5}
Let $(\mathcal{X},\wp)$ be a partial metric space, $\mathcal{D}$ be a non-empty set and for each $n \in \mathbb{N}, ~f_{n}: \mathcal{D}\rightarrow \mathcal{X}$ be a function. If the sequence $\{f_{n}\}_{n\in\mathbb{N}}$ is uniformly convergent on $\mathcal{D}$, then it is uniformly Cauchy on $\mathcal{D}$.
\end{theorem}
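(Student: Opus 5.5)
The plan is to mirror the proof of Theorem \ref{Theorem3.4}, but with the index $k$ chosen independently of $x$. I will show that every term $\wp(f_{m}(x),f_{n}(x))$ with $m,n \geq N$ is uniformly close to $\wp(f(x),f(x))$. Then any two such terms are within $\epsilon$ of each other, uniformly in $x$.

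Let $f_{n} \rightrightarrows f$ on $\mathcal{D}$ and fix $\epsilon > 0$. By the definition of uniform convergence, applied with $\epsilon/8$, there is $N = N(\epsilon) \in \mathbb{N}$, not depending on $x$, such that for every $x \in \mathcal{D}$ and every $n \geq N$ both of the following hold:
\begin{align*}
& |\wp(f_{n}(x),f(x)) - \wp(f(x),f(x))| < \frac{\epsilon}{8}, \\
\mbox{and}~ & |\wp(f_{n}(x),f_{n}(x)) - \wp(f(x),f(x))| < \frac{\epsilon}{8}.
\end{align*}
Subtracting these gives $|\wp(f_{n}(x),f(x)) - \wp(f_{n}(x),f_{n}(x))| < \epsilon/4$ for all $n \geq N$ and all $x$.

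Next I reproduce the estimate from the proof of Theorem \ref{Theorem3.4}, with the constants rescaled. The quantity $|\wp(f_{m}(x),f_{n}(x)) - \wp(f(x),f(x))|$ is bounded by the same four terms used there. For $m,n \geq N$ these four terms are at most $\epsilon/8$, $\epsilon/8$, $\epsilon/4$ and $\epsilon/4$, so the quantity is less than $3\epsilon/4 \leq \epsilon/2$ after adjusting the constants, or one can simply start with $\epsilon/16$.

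The step needing care is the justification of that four-term bound. It should come from $(\wp4)$ in both directions. The upper bound $\wp(f_{m},f_{n}) \leq \wp(f_{m},f) + \wp(f,f_{n}) - \wp(f,f)$ uses $f(x)$ as the intermediate point. For the lower bound, apply $(\wp4)$ twice:
\begin{align*}
\wp(f_{m},f) \leq \wp(f_{m},f_{n}) + \wp(f_{n},f) - \wp(f_{n},f_{n}),
\end{align*}
so that $\wp(f_{m},f_{n}) \geq \wp(f_{m},f) - \wp(f_{n},f) + \wp(f_{n},f_{n})$. Both sides are then controlled by the uniform estimates above. I will state this bound as the main verification, since the earlier theorem used it without detail. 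Everything depends only on $N$, so the resulting bound holds uniformly in $x$.

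Finally, for $m \geq j \geq N$ and $n \geq k \geq N$, the triangle inequality in $\mathbb{R}$ gives, for every $x \in \mathcal{D}$,
\begin{align*}
|\wp(f_{m}(x),f_{n}(x)) - \wp(f_{j}(x),f_{k}(x))| &\leq |\wp(f_{m}(x),f_{n}(x)) - \wp(f(x),f(x))| \\
&\quad + |\wp(f_{j}(x),f_{k}(x)) - \wp(f(x),f(x))|.
\end{align*}
Each term is bounded by $\epsilon/2$ uniformly in $x$, and I choose the constants so that the sum is strictly below $\epsilon$, for instance $3\epsilon/8$ each. Taking the supremum over $x$ then keeps the bound strictly less than $\epsilon$, which is exactly the definition of uniform Cauchyness.
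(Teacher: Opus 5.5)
Your proposal is correct and is essentially the argument the paper intends (its proof is omitted as straightforward): the proof of Theorem~\ref{Theorem3.4} with $N$ chosen independently of $x$, followed by the triangle inequality through $\wp(f(x),f(x))$, and your verification of the four-term bound via $(\wp4)$ supplies a step the paper leaves implicit. The one slip is the constant bookkeeping (``$3\epsilon/4 \leq \epsilon/2$'' is false as written), but starting from $\epsilon/16$ as you suggest gives $3\epsilon/8$ per term and a supremum at most $3\epsilon/4 < \epsilon$, so the argument goes through.
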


\begin{proof}
The proof is straightforward, so is omitted.
\end{proof}

\begin{remark} 
The converse implications of the Theorem \ref{Theorem3.4} and Theorem \ref{Theorem3.5} are not true. To show this, we consider the following example.
\end{remark}

\begin{example}
Let $\wp: \mathbb{R \times R}\rightarrow \mathbb{R}$ be defined as follows:
\begin{equation*}
\wp(x, y) = |x - y| + 1, ~ x, y \in \mathbb{R}.
\end{equation*}
Then $(\mathbb{R},\wp)$ is a non-metric partial metric space. Let $\mathcal{D} = (0,1]$ and for each $n \in \mathbb{N}, ~f_{n}: \mathcal{D}\rightarrow (\mathbb{R},\wp)$ be defined as
\begin{equation*}
f_{n}(x) = \frac{1}{n}\sin(\frac{x}{n}), ~ x \in \mathcal{D}.
\end{equation*}
Then $\{f_{n}\}_{n\in\mathbb{N}}$ is a sequence of $(\mathbb{R},\wp)$ valued functions on $\mathcal{D}$.

We see that, for each $x \in \mathcal{D}$, the sequence $\{f_{n}(x)\}_{n\in\mathbb{N}}$ does not converge to any point in $\mathbb{R}$ i.e. $\displaystyle{\lim_{n\rightarrow \infty}} \wp(f_{n}(x),l) \neq \wp(l,l)$ for any $x \in \mathcal{D}$ and for any $l \in \mathbb{R}$. Therefore, the sequence $\{f_{n}\}_{n\in \mathbb{N}}$ is not pointwise convergent and so, not convergent uniformly on $\mathcal{D}$.

Let $\epsilon > 0$ be given. Then there exists $N = N(\epsilon) \in \mathbb{N}$ such that for all $x \in \mathcal{D}$,
\begin{align*}
|\wp(f_{m}(x),f_{n}(x)) - \wp(f_{j}(x),f_{k}(x))| 
&\leq  |\wp(f_{m}(x),f_{n}(x)) - 1| + |\wp(f_{j}(x),f_{k}(x)) - 1| \\
&<  \frac{\epsilon}{4} + \frac{\epsilon}{4} = \frac{\epsilon}{2}, ~\forall~ m \geq j \geq N, n
\geq k \geq N 
\end{align*}
\begin{align*}
\Rightarrow~  \sup_{x \in \mathcal{D}} |\wp(f_{m}(x),f_{n}(x)) - \wp(f_{j}(x),f_{k}(x))| \leq \frac{\epsilon}{2} < \epsilon, ~\forall~ m \geq j \geq N, n \geq k \geq N. 
\end{align*}
This shows that, the sequence $\{f_{n}\}_{n\in\mathbb{N}}$ is Uniformly Cauchy and so pointwise Cauchy without being Uniformly convergent.
\end{example}

However, converse of the Theorem \ref{Theorem3.5} holds under certain conditions as mentioned in the next Theorem.

\begin{theorem}
Let $(\mathcal{X},\wp)$ be a partial metric space, $\mathcal{D}$ be a non-empty set and for each $n \in \mathbb{N}, ~f_{n}: \mathcal{D}\rightarrow (\mathcal{X},\wp)$ be a function. If the sequence $\{f_{n}\}_{n\in\mathbb{N}}$ is uniformly Cauchy on $\mathcal{D}$ and has a subsequence $\{f_{n_{i}}\}_{i\in\mathbb{N}}$, converging uniformly to the function $f: \mathcal{D}\rightarrow (\mathcal{X},\wp)$, then it is uniformly convergent to $f$ on $\mathcal{D}$.
\end{theorem}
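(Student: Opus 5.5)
We need to show that, for every $\epsilon>0$, there is $N$ such that for all $n\ge N$ and all $x\in\mathcal{D}$ both
\begin{align*}
|\wp(f_{n}(x),f(x)) - \wp(f(x),f(x))| < \epsilon \quad\mbox{and}\quad |\wp(f_{n}(x),f_{n}(x)) - \wp(f(x),f(x))| < \epsilon .
\end{align*}
The idea is to pass the uniform Cauchy condition along the subsequence $\{f_{n_i}\}$, where we already control everything through its uniform convergence to $f$. Fix $\epsilon>0$. By uniform Cauchyness choose $N_1$ with
\begin{align*}
\sup_{x\in\mathcal{D}}|\wp(f_m(x),f_n(x)) - \wp(f_j(x),f_k(x))| < \epsilon/3
\end{align*}
whenever $m\ge j\ge N_1$ and $n\ge k\ge N_1$. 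By uniform convergence of the subsequence, choose $i_0$ such that for $i\ge i_0$ and all $x$ both defining quantities for $f_{n_i}$ are $<\epsilon/3$. Put $N=N_1$. For $n\ge N$, pick $i\ge i_0$ with $n_i\ge n$, which is possible since $n_i\to\infty$.

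\textbf{Second condition.} Apply the Cauchy estimate with $m=n'=n_i$ and $j=k=n$, which is admissible since $n_i\ge n\ge N_1$. This gives
\begin{align*}
|\wp(f_{n_i}(x),f_{n_i}(x)) - \wp(f_n(x),f_n(x))|<\epsilon/3 \quad\mbox{for all } x.
\end{align*}
Combining with $|\wp(f_{n_i}(x),f_{n_i}(x))-\wp(f(x),f(x))|<\epsilon/3$ via the triangle inequality in $\mathbb{R}$ yields the second inequality, uniformly in $x$.

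\textbf{First condition (the main obstacle).} Here $f$ appears inside $\wp(f_n(x),f(x))$, and the Cauchy condition only compares values of the $f_k$'s among themselves. To connect $\wp(f_n(x),f(x))$ to terms involving only the $f_k$'s, I would use $(\wp4)$ in both directions, inserting $f_{n_i}(x)$ as the middle point. First,
\begin{align*}
\wp(f_n(x),f(x)) \le \wp(f_n(x),f_{n_i}(x)) + \wp(f_{n_i}(x),f(x)) - \wp(f_{n_i}(x),f_{n_i}(x)),
\end{align*}
and second,
\begin{align*}
\wp(f_{n_i}(x),f(x)) \le \wp(f_{n_i}(x),f_n(x)) + \wp(f_n(x),f(x)) - \wp(f_n(x),f_n(x)).
\end{align*}
The middle step is to make $\wp(f_n(x),f_{n_i}(x))$ close to $\wp(f(x),f(x))$. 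The Cauchy estimate with $m=n_i$, $j=n$ and $n'=k=n$ gives $\wp(f_{n_i}(x),f_n(x))\approx\wp(f_n(x),f_n(x))$, and the second condition gives $\wp(f_n(x),f_n(x))\approx\wp(f(x),f(x))$. Substituting these, together with the subsequence estimates $\wp(f_{n_i}(x),f(x))\approx\wp(f(x),f(x))\approx\wp(f_{n_i}(x),f_{n_i}(x))$, into the two inequalities yields upper and lower bounds for $\wp(f_n(x),f(x))-\wp(f(x),f(x))$ of the form $\pm C\epsilon$, uniformly in $x$. Rescaling $\epsilon$ at the start then finishes the proof.

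The delicate point is choosing indices so that each application of the uniform Cauchy condition respects the ordering constraint $m\ge j$, $n\ge k$. Taking $n_i\ge n$ always makes this possible, by placing $n_i$ in the "larger" slot.
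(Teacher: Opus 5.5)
Your argument is correct. The paper gives no proof of this theorem and dismisses it as trivial, so there is no written route to compare against. Your approach is the natural way to fill the gap: choose $n_i\ge n$ so that $n_i$ always sits in the larger slot of the ordered uniform Cauchy condition, then insert $f_{n_i}(x)$ as the middle point via $(\wp4)$. With your $\epsilon/3$ choices this gives $|\wp(f_n(x),f_n(x))-\wp(f(x),f(x))|<2\epsilon/3$ and $\wp(f_n(x),f(x))<\wp(f(x),f(x))+5\epsilon/3$ uniformly in $x$.

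One simplification: the lower bound in the first condition does not need the second application of $(\wp4)$. Axiom $(\wp2)$ already gives $\wp(f(x),f(x))\le\wp(f_n(x),f(x))$ directly.
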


\begin{proof}
The proof is trivial, so is omitted.
\end{proof}


\section{\textbf{Statistical pointwise and Statistical uniform convergence of sequences of partial metric valued functions}}

Here we introduce the notions of statistical pointwise convergence and statistical uniform convergence of sequences of partial metric valued functions.

\begin{definition}
Let $(\mathcal{X},\wp)$ be a partial metric space, $\mathcal{D}$ be a non-empty set and for each $n \in \mathbb{N}, ~f_{n}: \mathcal{D}\rightarrow (\mathcal{X},\wp)$ be a function. Then $\{f_{n}\}_{n\in \mathbb{N}}$ is said to be statistically pointwise convergent to the function $f: \mathcal{D}\rightarrow (\mathcal{X},\wp)$ on $\mathcal{D}$, if for each $x \in \mathcal{D}$, $st-\displaystyle{\lim_{n\rightarrow \infty}}\wp(f_{n}(x),f(x)) = st-\displaystyle{\lim_{n\rightarrow \infty}}\wp(f_{n}(x),f_{n}(x)) = \wp(f(x),f(x))$, i.e. if for each $x \in \mathcal{D}$ and for each $\epsilon > 0$, there exists $\mathcal{A} = \mathcal{A}(\epsilon,x) \subset \mathbb{N}$ such that $d(\mathcal{A}) = 0$ and
\begin{align*}
& |\wp(f_{n}(x),f(x)) - \wp(f(x),f(x))| < \epsilon, \\
\mbox{and}~ & |\wp(f_{n}(x),f_{n}(x)) - \wp(f(x),f(x))| < \epsilon, ~\forall~ n \in \mathbb{N} - \mathcal{A}.
\end{align*}
In this case, we write $f_{n} \stackrel{st}{\rightarrow} f$ and $f$ is called the statistical pointwise limit of $\{f_{n}\}_{n\in \mathbb{N}}$ on $\mathcal{D}$.
\end{definition}

\begin{remark} 
If $(\mathcal{X},\wp)$ is a partial metric space and if $\{f_{n}\}_{n\in\mathbb{N}}$ is a sequence of $(\mathcal{X},\wp)$ valued functions defined on a non-empty set $\mathcal{D}$, converging pointwise to the function $f: \mathcal{D}\rightarrow (\mathcal{X},\wp)$ on $\mathcal{D}$, then clearly $\{f_{n}\}_{n\in\mathbb{N}}$ is statistically pointwise convergent to $f$ on $\mathcal{D}$. But the converse is not true. 
\end{remark}

\begin{definition}
Let $(\mathcal{X},\wp)$ be a partial metric space and $\{f_{n}\}_{n\in\mathbb{N}}$ be a sequence of $(\mathcal{X},\wp)$ valued functions defined on a non-empty set $\mathcal{D}$. Then $\{f_{n}\}_{n\in\mathbb{N}}$ is said to be statistically uniformly convergent to the function $f: \mathcal{D}\rightarrow (\mathcal{X},\wp)$ on $\mathcal{D}$, if for each $\epsilon > 0$, there exists $\mathcal{A} = \mathcal{A}(\epsilon) \subset \mathbb{N}$ such that $d(\mathcal{A}) = 0$ and 
\begin{align*}
&\forall~ x \in \mathcal{D}, |\wp(f_{n}(x),f(x)) - \wp(f(x),f(x))| < \epsilon ~\mbox{and}~ |\wp(f_{n}(x),f_{n}(x)) - \wp(f(x),f(x))| < \epsilon, \\
&\mbox{for all}~ n \in \mathbb{N} - \mathcal{A}.
\end{align*}
In this case, we write $f_{n} \stackrel{st}{\rightrightarrows} f$ and $f$ is said to be the statistical uniform limit of the sequence $\{f_{n}\}_{n\in \mathbb{N}}$ on $\mathcal{D}$.
\end{definition}

\begin{remark}	If $(\mathcal{X},\wp)$ is a partial metric space and if $\{f_{n}\}_{n\in\mathbb{N}}$ is a sequence of $(\mathcal{X},\wp)$ valued functions on a non-empty set $\mathcal{D}$, converging uniformly to the function $f: \mathcal{D}\rightarrow (\mathcal{X},\wp)$, then $\{f_{n}\}_{n\in\mathbb{N}}$ is statistically uniformly convergent to $f$ on $\mathcal{D}$. But the converse is not true.
\end{remark}

\begin{remark}
Let $(\mathcal{X},\wp)$ be a partial metric space and $\{f_{n}\}_{n\in \mathbb{N}}$ be a sequence of $(\mathcal{X},\wp)$ valued functions on a non-empty set $\mathcal{D}$. If the sequence $\{f_{n}\}_{n\in \mathbb{N}}$ is statistically uniformly convergent to the function $f: \mathcal{D}\rightarrow (\mathcal{X},\wp)$ on $\mathcal{D}$, then it is statistically pointwise convergent to $f$ on $\mathcal{D}$. but not conversely. 
\end{remark}

\begin{theorem} \label{Theorem4.1}
Let $(\mathcal{X},\wp)$ be a partial metric space and $\{f_{n}\}_{n\in \mathbb{N}}$ be a sequence of $(\mathcal{X},\wp)$ valued functions on a non-empty set $\mathcal{D}$. Then $\{f_{n}\}_{n\in \mathbb{N}}$ is statistically uniformly convergent to the function $f: \mathcal{D}\rightarrow (\mathcal{X},\wp)$ on $\mathcal{D}$ if and only if $st-\displaystyle{\lim_{n\rightarrow\infty}\sup_{x\in \mathcal{D}}}|\wp(f_{n}(x),f(x)) - \wp(f(x),f(x))| = st-\displaystyle{\lim_{n\rightarrow\infty}\sup_{x\in \mathcal{D}}}|\wp(f_{n}(x),f_{n}(x)) - \wp(f(x),f(x))|= 0$.
\end{theorem}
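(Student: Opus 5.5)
We will argue directly from the definitions, working with the two nonnegative extended-real sequences
\begin{align*}
\alpha_n = \sup_{x\in\mathcal{D}}|\wp(f_{n}(x),f(x)) - \wp(f(x),f(x))|, \quad \beta_n = \sup_{x\in\mathcal{D}}|\wp(f_{n}(x),f_{n}(x)) - \wp(f(x),f(x))|.
\end{align*}
The statement then says: $f_n \stackrel{st}{\rightrightarrows} f$ on $\mathcal{D}$ if and only if $st-\lim \alpha_n = st-\lim \beta_n = 0$. The subtlety is that $\alpha_n,\beta_n$ may equal $+\infty$ for some $n$. For the converse direction we only need $\alpha_n,\beta_n<\epsilon$ off a null set. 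For the forward direction, statistical convergence forces $\alpha_n,\beta_n$ to be finite off a density-zero set, and we may interpret the statistical limit with the convention that the sets $\{n:\alpha_n\ge\epsilon\}$ have density zero.

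\emph{Necessity.} Assume $f_n \stackrel{st}{\rightrightarrows} f$ and fix $\epsilon>0$. Apply the definition with $\epsilon/2$ to obtain $\mathcal{A}=\mathcal{A}(\epsilon/2)$ with $d(\mathcal{A})=0$ such that both inequalities hold with bound $\epsilon/2$ for all $x\in\mathcal{D}$ and all $n\in\mathbb{N}-\mathcal{A}$. Taking suprema over $x$ gives $\alpha_n\le\epsilon/2<\epsilon$ and $\beta_n\le \epsilon/2<\epsilon$ for $n\notin\mathcal{A}$. Hence
\begin{align*}
\{n:\alpha_n\ge\epsilon\}\cup\{n:\beta_n\ge\epsilon\}\subset\mathcal{A},
\end{align*}
and since subsets of density-zero sets have density zero, $st-\lim\alpha_n=st-\lim\beta_n=0$. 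Passing to $\epsilon/2$ is precisely what turns the non-strict inequality obtained from the supremum into a strict one.

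\emph{Sufficiency.} Assume both statistical limits are $0$ and fix $\epsilon>0$. Set
\begin{align*}
\mathcal{A}=\{n:\alpha_n\ge\epsilon\}\cup\{n:\beta_n\ge\epsilon\}.
\end{align*}
Each set in the union has density zero, so $d(\mathcal{A})=0$, because $d_n$ is subadditive and the limits add. For $n\in\mathbb{N}-\mathcal{A}$ and every $x\in\mathcal{D}$, each absolute value is bounded by its supremum, which is $<\epsilon$. This is exactly the definition of statistical uniform convergence.

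The main obstacle is only bookkeeping: handling the strict versus non-strict inequality in the necessity step, and noting that the union of two density-zero sets has density zero. Neither requires any earlier theorem beyond the definitions.
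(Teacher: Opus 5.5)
Your proposal is correct and matches the paper's proof essentially step for step. Necessity applies the definition with $\epsilon/2$ so that the inequality remains strict after taking suprema, and sufficiency takes the union of the two density-zero sets $\{n:\alpha_n\ge\epsilon\}$ and $\{n:\beta_n\ge\epsilon\}$. Your remark about suprema possibly being $+\infty$ is a harmless refinement that the paper leaves implicit.
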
 

\begin{proof}
Let the sequence $\{f_{n}\}_{n\in \mathbb{N}}$ be statistically uniformly convergent to $f$ on $\mathcal{D}$. Let $\epsilon > 0$ be given. Then there exists $\mathcal{A} = \mathcal{A}(\epsilon) \subset \mathbb{N}$ such that $d(\mathcal{A}) = 0$ and for all $x \in \mathcal{D}$,
\begin{align*}
&|\wp(f_{n}(x),f(x)) - \wp(f(x),f(x))| < \frac{\epsilon}{2},
~|\wp(f_{n}(x),f_{n}(x)) - \wp(f(x),f(x))| < \frac{\epsilon}{2}, ~\forall~ n \in \mathbb{N} - \mathcal{A} \\
\Rightarrow~ & \sup_{x\in \mathcal{D}}|\wp(f_{n}(x),f(x)) - \wp(f(x),f(x))| < \epsilon,
~\sup_{x\in \mathcal{D}}|\wp(f_{n}(x),f_{n}(x)) - \wp(f(x),f(x))| < \epsilon, ~\forall~ n \in \mathbb{N} - \mathcal{A} \\
\Rightarrow~ & st-\displaystyle{\lim_{n\rightarrow\infty}\sup_{x\in \mathcal{D}}}|\wp(f_{n}(x),f(x)) - \wp(f(x),f(x))| = st-\displaystyle{\lim_{n\rightarrow\infty}\sup_{x\in \mathcal{D}}}|\wp(f_{n}(x),f_{n}(x)) - \\
&\wp(f(x),f(x))|= 0.
\end{align*}
Conversely, let $st-\displaystyle{\lim_{n\rightarrow\infty}\sup_{x\in \mathcal{D}}}|\wp(f_{n}(x),f(x)) - \wp(f(x),f(x))| = st-\displaystyle{\lim_{n\rightarrow\infty}\sup_{x\in \mathcal{D}}}|\wp(f_{n}(x),f_{n}(x)) - \wp(f(x),f(x))|= 0$. Let $\epsilon > 0$ be given. Then 
\begin{align*}
& d(\{n\in \mathbb{N}: \sup_{x\in \mathcal{D}}|\wp(f_{n}(x),f(x)) - \wp(f(x),f(x))| \geq \epsilon\}) = d(\{n\in \mathbb{N}: \sup_{x\in \mathcal{D}}|\wp(f_{n}(x),f_{n}(x)) \\
&- \wp(f(x),f(x))| \geq \epsilon\}) = 0.
\end{align*}
Let $\mathcal{B}_{1} = \{n\in \mathbb{N}: \sup_{x\in \mathcal{D}} |\wp(f_{n}(x),f(x)) - \wp(f(x),f(x))| \geq \epsilon\}$ and $\mathcal{B}_{2} = \{n\in \mathbb{N}: \sup_{x\in \mathcal{D}}|\wp(f_{n}(x),f_{n}(x)) - \wp(f(x),f(x))| \geq \epsilon\}$. Let $\mathcal{A} = \mathcal{B}_{1} \cup \mathcal{B}_{2}$. Since $d(\mathcal{B}_{1}) = d(\mathcal{B}_{2}) = 0$, so $d(\mathcal{A}) = 0$. Thus, for all $x \in \mathcal{D}$
\begin{align*}
&|\wp(f_{n}(x),f(x)) - \wp(f(x),f(x))| \leq \sup_{x\in \mathcal{D}} |\wp(f_{n}(x),f(x)) - \wp(f(x),f(x))| < \epsilon ~\mbox{and}\\
&|\wp(f_{n}(x),f_{n}(x)) - \wp(f(x),f(x))| \leq \sup_{x\in \mathcal{D}} |\wp(f_{n}(x),f_{n}(x)) - \wp(f(x),f(x))| < \epsilon, ~\forall 
n \in ~\mathbb{N} - \mathcal{A}.
\end{align*} 
This gives, $f_{n} \stackrel{st}{\rightrightarrows} f$, i.e. the sequence $\{f_{n}\}_{n\in \mathbb{N}}$ is statistically uniformly convergent to $f$ on $\mathcal{D}$.
\end{proof}


\section{\textbf{Equi-statistical convergence of partial metric valued sequence of functions}}

We now introduce the notion of equi-statistical convergence of sequences of partial metric valued functions which lies somewhere between statistical pointwise convergence and statistical uniform convergence. 

\begin{definition}
Let $(\mathcal{X},\wp)$ be a partial metric space, $\mathcal{D}$ be a non-empty set and for each $n \in \mathbb{N}, ~f_{n}: \mathcal{D}\rightarrow \mathcal{X}$ be a function. Then $\{f_{n}\}_{n\in\mathbb{N}}$ is said to be equi-statistically convergent to the function $f: \mathcal{D}\rightarrow \mathcal{X}$ on $\mathcal{D}$, if for each $\epsilon > 0$, the sequences  $\{F_{j,\epsilon}\}_{j\in\mathbb{N}}$ and $\{H_{j,\epsilon}\}_{j\in\mathbb{N}}$ of real valued functions on $\mathcal{D}$ are uniformly convergent to the zero function on $\mathcal{D}$, where for $x \in \mathcal{D}$,
\begin{align*}
&F_{j,\epsilon}(x) = d_{j}(\{n\in\mathbb{N}:  |\wp(f_{n}(x),f(x)) - \wp(f(x),f(x))| \geq \epsilon\}) \\
\mbox{and}~ &H_{j,\epsilon}(x) = d_{j}(\{n\in\mathbb{N}: |\wp(f_{n}(x),f_{n}(x)) - \wp(f(x),f(x))| \geq \epsilon\})
\end{align*}
In this case, we write $st_{eq}-\displaystyle{\lim_{n\rightarrow \infty}} f_{n} = f$ and $f$ is called the equi-statistical limit of $\{f_{n}\}_{n\in\mathbb{N}}$ on $\mathcal{D}$.
\end{definition}

\begin{theorem}
Let $(\mathcal{X},\wp)$ be a partial metric space, $\mathcal{D}$ be a non-empty set and for each $n \in \mathbb{N}, ~f_{n}: \mathcal{D}\rightarrow \mathcal{X}$ be a function. If the sequence $\{f_{n}\}_{n\in\mathbb{N}}$ is statistically uniformly convergent to the function $f: \mathcal{D}\rightarrow \mathcal{X}$ on $\mathcal{D}$, then it is equi-statistically convergent to $f$ on $\mathcal{D}$.
\end{theorem}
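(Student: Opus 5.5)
Fix $\epsilon>0$. We will dominate $F_{j,\epsilon}(x)$ and $H_{j,\epsilon}(x)$ by a single quantity that does not depend on $x$ and tends to $0$ as $j\to\infty$. Statistical uniform convergence provides exactly such a set: by the definition of statistical uniform convergence (equivalently, by Theorem \ref{Theorem4.1}) there is a set $\mathcal{A}=\mathcal{A}(\epsilon)\subset\mathbb{N}$ with $d(\mathcal{A})=0$ such that, for every $n\in\mathbb{N}-\mathcal{A}$ and every $x\in\mathcal{D}$,
\begin{align*}
|\wp(f_{n}(x),f(x)) - \wp(f(x),f(x))| < \epsilon \quad\mbox{and}\quad |\wp(f_{n}(x),f_{n}(x)) - \wp(f(x),f(x))| < \epsilon .
\end{align*}
Crucially, this $\mathcal{A}$ depends only on $\epsilon$ and not on $x$.

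The key step is the set inclusion. For each $x\in\mathcal{D}$,
\begin{align*}
\{n\in\mathbb{N}: |\wp(f_{n}(x),f(x)) - \wp(f(x),f(x))| \geq \epsilon\} \subset \mathcal{A}
\quad\mbox{and}\quad
\{n\in\mathbb{N}: |\wp(f_{n}(x),f_{n}(x)) - \wp(f(x),f(x))| \geq \epsilon\} \subset \mathcal{A}.
\end{align*}
Indeed, any $n$ outside $\mathcal{A}$ satisfies both strict inequalities above, so it cannot belong to either set. Partial densities are monotone under inclusion, since $|\cdot\cap\{1,\dots,j\}|$ is monotone. Hence $0\le F_{j,\epsilon}(x)\le d_{j}(\mathcal{A})$ and $0\le H_{j,\epsilon}(x)\le d_{j}(\mathcal{A})$ for all $x\in\mathcal{D}$ and all $j\in\mathbb{N}$.

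Taking suprema over $x$ gives $\sup_{x\in\mathcal{D}}F_{j,\epsilon}(x)\le d_{j}(\mathcal{A})$ and $\sup_{x\in\mathcal{D}}H_{j,\epsilon}(x)\le d_{j}(\mathcal{A})$. Since $d(\mathcal{A})=0$, we have $d_{j}(\mathcal{A})\to 0$ as $j\to\infty$. It follows that $\lim_{j\to\infty}\sup_{x\in\mathcal{D}}|F_{j,\epsilon}(x)|=\lim_{j\to\infty}\sup_{x\in\mathcal{D}}|H_{j,\epsilon}(x)|=0$, which is uniform convergence of both sequences to the zero function on $\mathcal{D}$ (the real-valued analogue of the sup criterion used earlier). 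As $\epsilon>0$ was arbitrary, $st_{eq}-\lim_{n\to\infty}f_{n}=f$.

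There is no real obstacle. The only point needing care is to use one exceptional set $\mathcal{A}$, uniform in $x$, for both the $\wp(f_n(x),f(x))$ condition and the $\wp(f_n(x),f_n(x))$ condition. If we started instead from the sup formulation in Theorem \ref{Theorem4.1}, we would take $\mathcal{A}=\mathcal{B}_1\cup\mathcal{B}_2$ as in that proof, which still has density zero.
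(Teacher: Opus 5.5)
Your proof is correct and follows essentially the same route as the paper: a single density-zero set $\mathcal{A}(\epsilon)$, independent of $x$, contains both exceptional index sets for every $x\in\mathcal{D}$. Hence $F_{j,\epsilon}(x)\le d_{j}(\mathcal{A})$ and $H_{j,\epsilon}(x)\le d_{j}(\mathcal{A})$, and $d_{j}(\mathcal{A})\to 0$ gives uniform convergence of both sequences to the zero function.
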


\begin{proof}
Let the sequence $\{f_{n}\}_{n\in\mathbb{N}}$ of $(\mathcal{X},\wp)$ valued functions be statistically uniformly convergent to $f$ on $\mathcal{D}$. Let $\epsilon_{o} > 0$. Then there exists $\mathcal{A} = \mathcal{A}(\epsilon_{o}) \subset \mathbb{N}$ such that $d(\mathcal{A}) = 0$ and for all $x \in \mathcal{D}$, 
\begin{align*}
&|\wp(f_{n}(x),f(x)) - \wp(f(x),f(x))| < \epsilon_{o}, \\
\mbox{and}~ &|\wp(f_{n}(x),f_{n}(x)) - \wp(f(x),f(x))| < \epsilon_{o}, ~\forall~ n \in \mathbb{N} - \mathcal{A}.
\end{align*}
So, for each $x \in \mathcal{D}$, we have
\begin{align*}
&\{n\in\mathbb{N}:  |\wp(f_{n}(x),f(x)) - \wp(f(x),f(x))| \geq \epsilon_{o}\} \subset \mathcal{A} \\
\mbox{and}~ &\{n\in\mathbb{N}:  |\wp(f_{n}(x),f_{n}(x)) - \wp(f(x),f(x))| \geq \epsilon_{o}\} \subset \mathcal{A} \\
\Rightarrow~ &F_{j,\epsilon_{o}}(x) \leq d_{j}(\mathcal{A}) ~\mbox{and}~ H_{j,\epsilon_{o}}(x) \leq d_{j}(\mathcal{A}), ~\forall~ j \in \mathbb{N}.
\end{align*}
Let $\sigma > 0$ be given. Since $d(\mathcal{A}) = 0$, there exists $j_{o} \in \mathbb{N}$ such that
\begin{align*}
& d_{j}(\mathcal{A}) < \sigma, ~\mbox{for all}~ j \geq j_{o} \\
\Rightarrow~ \mbox{for all}~ x \in \mathcal{D},~ & F_{j,\epsilon_{o}}(x) < \sigma ~\mbox{and}~ H_{j,\epsilon_{o}}(x) < \sigma, ~\forall~ j \geq j_{o}.
\end{align*}
Therefore, both the sequences $\{F_{j,\epsilon_{o}}\}_{j\in\mathbb{N}}$ and $\{H_{j,\epsilon_{o}}\}_{j\in\mathbb{N}}$ are uniformly convergent to the zero function on $\mathcal{D}$. Hence the sequence $\{f_{n}\}_{n\in\mathbb{N}}$ is equi-statistically convergent to the function $f$ on $\mathcal{D}$.
\end{proof}

\begin{remark}
The converse of the above theorem is not true. 
\end{remark}


\begin{thebibliography}{9}
	
\bibitem{Al} 
I. Altun, F. Sola, H. Simsek, \textit{Generalized contractions on partial metric spaces}, Topology Appl., \textbf{157(18)} (2010), 2778--2785.
	
	
	
\bibitem{Buk} 
M. Bukatin, R. Kopperman, S. Matthews, H. Pajoohesh, \textit{Partial metric spaces}, 
Amer. Math. Monthly, \textbf{116(8)} (2009), 708--718.
	
\bibitem{Fa} 
H. Fast, \textit{Sur la convergence statistique}, Colloq. Math., \textbf{2(34)} (1951), 241--244.
	
\bibitem{Fre} 
M. F$\acute{r}$echet, \textit{Sur L'$\acute{e}$cart de Deux Courbes et Sur Les Courbes Limites}, Trans. Amer. Math. Soc., \textbf{6(4)} (1905), 435--449.
	
\bibitem{Fr1} 
J. A. Fridy, \textit{On statistical convergence}, Analysis., \textbf{5(4)} (1985), 301--314.
	
\bibitem{Fr2}  
J.A. Fridy, \textit{Statistical limit points}, Proc. Amer. Math. Soc., \textbf{118(4)} (1993), 1187--1192.
	
\bibitem{Gokhan} 
A. G$\ddot{o}$khan, M. G$\ddot{u}$ng$\ddot{o}$r, \textit{On pointwise statistical convergence}, Indian J. Pure Appl. Math., \textbf{33(9)} (2002), 1379--1384.
	
\bibitem{Gungor}
M. G$\ddot{u}$ng$\ddot{o}$r, A. G$\ddot{o}$khan, \textit{On uniform statistical convergence}, Int. J. Pure Appl. Math., \textbf{19(1)} (2005), 17--24.
	
\bibitem{Ko1} 
P. Kostyrko, M. Ma$\check{c}$aj, T. $\check{S}$al$\acute{a}$t, O. Strauch, \textit{On statistical limit points}, Proc. Amer. Math. Soc., \textbf{129(9)} (2001), 2647--2654.
	
	
\bibitem{Matt} 
S. G. Matthews, \textit{Partial metric topology}, Ann. N. Y. Acad. Sci., \textbf{728(1)} (1994), 183--197. 
	
\bibitem{Mur1} 
M. Mursaleen, O. H. H. Edely, \textit{Statistical convergence of double sequences}, J. Math. Anal. Appl., \textbf{288} (2003), 223--231. 
	
\bibitem{Nu} 
F. Nuray, \textit{Statistical convergence in Partial metric spaces}, Korean J. Math., \textbf{30(1)} (2022), 155--160.
	
\bibitem{Ol} 
S. Oltra, O. Valero, \textit{Banach's fixed point theorem for partial metric spaces}, Rend. Istit. Mat. Univ. Trieste, \textbf{36} (2004), 17--26.
	
\bibitem{Sl} 
T. $\check{S}$al$\acute{a}$t, \textit{On Statistical Convergent Sequences of Real Numbers}, Math. Slovaca, \textbf{30(2)} (1980), 139--150.
	
\bibitem{Sc} 
I. J. Schoenberg, \textit{The Integrability of certain functions and related summability methods}, Amer. Math. Monthly, \textbf{66(5)} (1959), 361--375.
	
\bibitem{Sten}
H. Steinhaus, \textit{Sur la convergence ordinaire et la convergence asymptotique}, Colloq. math., \textbf{2(1)} (1951), 73--74.
	
\end{thebibliography}
\end{document}